\documentclass[12pt]{amsart}
\usepackage{amsfonts}
\usepackage{amsmath}
\usepackage{amsxtra}
\usepackage{amssymb,latexsym}
\usepackage[mathcal]{eucal}
\usepackage{amscd}
\usepackage{tikz-cd}
\usepackage{hyperref}

\newtheorem{theo}{{\bfseries Theorem}}[section]

\newtheorem{lem}[theo]{{\bfseries Lemma}}
\newtheorem{cor}[theo]{{\bfseries Corollary}}
\newtheorem{df}[theo]{{\bfseries Definition}}

\newtheorem{ex}[theo]{{\bfseries Example}}

\def \ol {\overline}
\def \N {\mathbb N}

\def \Z {\mathbb Z}

\def \A {\mathcal A}
\def \B {\mathcal B}
\def \CC {\mathcal C}

\def \NN {\mathcal N}

\def \O {\mathcal O}

\def \a {\alpha }

\def \b {\beta}

\def \ep {\epsilon}
\def \om {\omega}
\def \th {\theta}
\def \d {\delta}

\def \1  {{\mathbf 1}}
\def \0  {{\mathbf 0}}

\usepackage{amssymb,latexsym}
\usepackage[mathcal]{eucal}
\usepackage{amscd}
\usepackage{amsmath}
\usepackage{graphicx}
\usepackage{amsfonts}
\usepackage{amscd}

\numberwithin{equation}{section}
\begin{document}

\begin{titlepage}
\large
\title {\bfseries Realizing Arbitrary Depth}
\author{Ethan Akin}
 \vspace{.7cm}

\address{Mathematics Department \\
    The City College \\ 137 Street and Convent Avenue \\
       New York City, NY 10031, USA     }
\email{ethanakin@earthlink.net}

\date{July, 2026} \vspace{.7cm}

{\footnotesize \begin{abstract}We provide a simple construction which realizes the Birkhoff center depth at an arbitrary ordinal level and relate it
to the Cantor-Bendixson depth.
\end{abstract}}

\keywords{ Birkhoff depth, Cantor-Bendixson depth, chain transitivity }

\thanks{{\em 2010 Mathematical Subject Classification} 37B20, 03E10}

\end{titlepage}
\maketitle

For Benjy Weiss: An inspiration and a joy.\vspace{.7cm}

\setcounter{page}{1}
\section{ \textbf{Introduction}}\label{intro}\vspace{.5cm}

For us, a dynamical system is a pair $(X,f)$ with $X$ a compact metric space and $f : X \to X$ a homeomorphism.

We are concerned here with three notions of topological
depth.

In each case there is defined a transfinite decreasing sequence of closed subsets, indexed by the ordinals. At each step the space $X$ is replaced by
a closed subspace $z(X)$. In detail, we
begin with $X_0 = X$ and then proceed by defining
\begin{equation}\label{eq0.01}\begin{split}
X_{\a + 1} \ = \ z(X_{\a}), \hspace{2cm} \\
X_{\a} \ = \ \bigcap_{\b < \a} \ X_{\b} \quad \text{for} \ \a \ \text{a limit ordinal}.
\end{split}\end{equation}
The sequence stabilizes when $X_{\a} = X_{\a + 1}$ in which case, $X_{\a} = X_{\b}$ for all $\b$ with  $\a < \b$. We denote by $\th(X)$ the
minimum such $\a$ and refer to it as the associated \emph{depth}. Since we are restricting attention to compact metric spaces,
the depth is a countable ordinal.

For the, purely topological, Cantor-Bendixson procedure, $z_{CB}(X)$ is the derived subset of accumulation points of $X$. That is,
$z_{CB}(X)$ is the complement of the open set of isolated points of $X$. A nonempty compact metric space with no isolated points is
said to be a \emph{perfect} space.
Any nonempty open subset of a perfect space contains a Cantor set through each point and so is uncountable. So if $z_{CB}(X)= X$, then $X$ is perfect or
empty. On the other hand, a compact metric space admits
only countably many isolated points. So if $\th = \th_{CB}(X)$, then $X \setminus X_{\th}$ is countable since $\th$ is a countable ordinal. Thus,
$X_{\th}$ is perfect if and only if $X$ is uncountable.  If $X$ is countable, then $X_{\th} = \emptyset$. Because the monotone intersection of
nonempty compacta is nonempty, it follows that when $X$ is countable, $\th_{CB}(X)$ cannot be a limit ordinal. The Cantor-Bendixson
center  $X_{\th}$ is $\{ x \in X : U$ is uncountable for every open set $U$ such that $x \in U \}$. It is the union of the
Cantor subsets of $X$.

For a dynamical system $(X,f)$, there are two versions of
the Birkhoff procedure, $z_{BN}(X)$ is the set of non-wandering points of $X$ and
$z_{BL}(X)$ is the closure of the union of the limit point sets of points
of $X$. Such limit points are always non-wandering and so $z_{BL}(X) \subset z_{BN}(X)$. However, $X = z_{BL}(X)$ if and only if $X = z_{BN}(X)$ because
each is equivalent to the condition that the recurrent points are dense. Thus, both procedures converge to the same subset $X_{\th}$, the
\emph{Birkhoff center}, \cite{B}. However, the descent via the limit points might be much faster than that via the non-wandering points.  It is
clear that $\th_{BL} \le \th_{BN}$ but the inequality might be strict.

Inspired by the flow examples of Maier \cite{M}(see also flow examples in Neumann \cite{N}), Shapovalov \cite{S00} constructed for every countable ordinal $\a$ a system $(X^{\a},f^{\a})$ such that $\th_{BN}(X^{\a}) = \a$, thus realizing every depth. His examples are countable, invertible subshifts with Birkhoff center consisting of a single
fixed point. Moreover, the fixed point is the alpha limit and the omega limit of every point. Thus, $\th_{BL}(X^{\a}) = 2$.

Other versions of the realization theorems occur in the literature. Theorem 5.35 of \cite{AG19} provides examples which are countable, weakly almost periodic subshifts. Kato in \cite{K} has examples on dendrites and Euclidean balls as well as countable examples. Alsed\`{a} et al \cite{ACS}
obtain the point version of depth results for maps of the interval.

I would like to thank the referee for pointing out some valuable references that I had missed.

The examples we construct are simple in the following sense.\vspace{.25cm}

\begin{df}\label{def0.01} A dynamical system $(X,f)$ is a \emph{simple system} when it satisfies the following conditions.
\begin{itemize}
\item The space $X$ is countable.
\item There is a fixed point $e \in X$ which is the unique recurrent point.
\item Every non-isolated point is an omega limit point of some isolated point.
\end{itemize}

The system is a \emph{trivial system} when $X$ is the singleton $\{ e \}$. \end{df} \vspace{.25cm}

An isolated point is non-wandering if and only
if it is periodic. So if $X$ contains no periodic isolated points, then $z_{BL}(X) \subset z_{BN}(X) \subset z_{CB}(X)$. For a non-trivial
simple system
\begin{equation}\label{eq0.02}
z_{BL}(X) = z_{BN}(X) = z_{CB}(X).
\end{equation}

Our examples will be simple systems at each level and so the three transfinite
sequences will be the same until the Birkhoff center $\{ e \}$ is reached.  Thus, with $\th = \th_{BN} = \th_{BL}$ we have $X_{\a}$
the same for all three procedures for $\a \le \th$  and $X_{\th} = \{ e \}$. The fixed point is recurrent and so the Birkhoff sequences stop there.
Because $e$ is an isolated point, the Cantor-Bendixson sequence has one more step, so that $\th_{CB} = \th + 1$ with $X_{\th + 1} = \emptyset$.

Our main result is the construction, for every countable ordinal $\th$, of a simple example with $\th_{BN} = \th_{BL} = \th$.\vspace{.25cm}

\begin{theo}\label{theo0.02} If $\th$ is a countable ordinal, then there exists a dynamical system $(X,f)$ such that
for every ordinal $\a$ with $0 \le \a < \th$ the system $(X_{\a},f|X_{\a})$ is a non-trivial simple system and  with $X_{\th} = \{ e \}$. Thus,
$\th = \th_{BN}(X) = \th_{BL}(X)$ and $\th + 1 =  \th_{CB}(X).$\end{theo}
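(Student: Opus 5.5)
We argue by transfinite induction on $\th$, building the examples with two operations on simple systems. The base case $\th = 0$ is the trivial system $X = \{ e \}$. Then $X_0 = \{ e \}$, the condition on $\a < 0$ is vacuous, and $\th_{CB} = 1$. By (\ref{eq0.02}), once we have a space in which every $X_\a$ with $\a < \th$ is a non-trivial simple system, the three derived sequences coincide up to $\th$. So the identities $\th_{BN} = \th_{BL} = \th$ and $\th_{CB} = \th + 1$ follow automatically from the first assertion together with $X_\th = \{ e \}$. The whole task is therefore to build, for each countable $\th$, a countable system whose Cantor–Bendixson derived sets $X_\a$ are simple systems for $\a < \th$ and whose $\th$-th derived set is $\{ e \}$.

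\textbf{Wedge construction.} Given a countable family of systems $(Y^n, f^n)$, each simple with fixed point $e_n$, we form the one-point wedge. Take the disjoint union, identify all the $e_n$ to a single point $e$, and metrize so that $\mathrm{diam}\, Y^n \to 0$. The result is compact and countable, and $e$ is still the unique recurrent point. Derived sets commute with the wedge: $X_\a$ is the wedge of the $Y^n_\a$, together with $e$ whenever infinitely many factors survive. Hence the derived sets of the wedge are again wedges of simple systems and are themselves simple.

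\textbf{Successor step $\th = \b + 1$.} Starting from a simple $(Y,g)$ of depth $\b$, we build $X$ in which $Y$ appears as the derived set, with a new layer of isolated points on top. Concretely, enumerate $Y \setminus \{ e \}$ as $y_1, y_2, \dots$ and set $X = Y \cup \{ (k, j) : k \in \N, j \in \Z \}$. Each orbit $\{ (k,j) \}_{j \in \Z}$ is a single isolated orbit with $f(k,j) = (k, j+1)$. It is shadowed ever more closely so that its $\alpha$-limit is $\{ e \}$ and its $\omega$-limit is the orbit closure of $y_k$ in $Y$. This orbit closure is compact and invariant and contains $e$, since $e$ is the unique recurrent point and $y_k$ has an $\omega$-limit inside the unique minimal set. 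Two natural ways to realize this shadowing are to take $Y$ as a subshift and let the new points be sequences converging to the given orbits, or to define a metric directly. In either case we must arrange that the new points accumulate only on $Y$. Then $z_{CB}(X) = Y$, every point of $Y$ (including $e$) is an $\omega$-limit of an isolated point, and $e$ remains the only recurrent point. Note that since $Y$ is non-trivial, points of $Y_\a$ for $\a \ge 1$ are already $\omega$-limits of isolated points of $Y_\a$, so simplicity at higher levels is inherited from $Y$.

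\textbf{Limit step.} For a limit ordinal $\th$, choose $\b_n \uparrow \th$ and take the wedge of examples of depth $\b_n$. Then $X_\a$ is non-trivial for $\a < \th$, because some factor is still non-trivial, and $X_\th = \{ e \}$. Simplicity of each $X_\a$ follows from the wedge remark above.

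\textbf{Main obstacle.} The hardest step is the successor construction: making the accumulation of the new orbits precise, so that compactness holds, the limit sets are exactly the intended ones, and no new recurrence or extra accumulation points appear. The most concrete route is symbolic, taking all systems to be countable subshifts. The new point for $y_k$ is then the sequence that is $e$ (the $0^\infty$ point) on the left and agrees with $y_k$ on longer and longer blocks on the right. Separate infinitely many copies by inserting a marker symbol whose position encodes $k$. We then verify directly that every limit of these points lies in $Y$, which is the closed orbit structure built in the previous stages.
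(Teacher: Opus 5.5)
\textbf{Gap: your successor step gives depth $1+\b$, not $\b+1$.} In that step the new points are exactly the isolated points of $X$, and their accumulation set is $Y$, so $z_{CB}(X) = Y$. By induction $X_{1+\d} = Y_\d$ for every ordinal $\d$. At a limit $\lambda$, use that $1+\lambda = \lambda$ and that $\{1+\d : \d<\lambda\}$ is cofinal in $\lambda$. Hence $X_{1+\b} = Y_\b = \{e\}$, and the depth of $X$ is $1+\b$. For finite $\b$ this is $\b+1$. For infinite $\b$, however, $1+\b = \b$, so the new layer is absorbed. For example, starting from $Y$ of depth $\om$ you get $X_\om = Y_\om = \{e\}$, so $X$ again has depth $\om$. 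Your induction therefore only reaches depths $\le \om$. It fails at $\om+1$, and the limit step at $\om\cdot 2$ has no inputs. Your successor construction is essentially the attachment $aY$, and this is exactly the failure the paper notes for the sequence in (\ref{eq3.06}), since Theorem \ref{theo3.04}(a) gives $\th(aY) = 1+\th(Y)$.

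\textbf{Missing idea.} The new layer must be inserted at the end of the derivation sequence, not at the beginning. You need to modify the system near the base point so that after $\b$ steps what remains is not $\{e\}$ but a non-trivial simple system that requires exactly one more step. The paper does this with the stretched suspension $sX$. Each $x \in D_k$ is replaced by the $2k+1$ copies $D_k\times[-k,k]$, and $e$ is replaced by $\{e\}\times\Z^*$. Then $(sX)_\a = s(X_\a)$ for $\a \le \th(X)$, and $(sX)_{\th(X)} = s\{e\}$ is the translation on $\Z^*$, which collapses to $\{(e,*)\}$ one step later. This gives $\th(sX) = \th(X)+1$ (Theorem \ref{theo3.04}(b)). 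Your wedge and limit step match the paper's pointed union (Theorem \ref{theo3.04}(c)). They are fine once correct successor examples are available, provided every factor is simple at every level, which your inductive hypothesis should state explicitly.
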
\vspace{.5cm}

\section{ \textbf{Transitivity and Recurrence}}\vspace{.5cm}

 As usual we use $\N$ for the
set of positive integers and $\Z$ for the entire set of integers. We let  $\Z^*$ denote the
one point compactification of $\Z$ with the point at infinity labelled $*$.  On $\Z^*$ we use the metric (actually an ultra-metric) given by:
\begin{equation}\label{eq1.00}
d(m,n) = \max(1/(|n|+1),1/(|m|+1)),\ \ \text{for} \ \ m \not= n,
\end{equation}
with $1/(* + 1)$ defined to be $0$.

Recall that for a sequence $\{ A_n \}$ of closed subsets of $X$, the closure of the union consists of the union together with the limsup of the sequence, i.e.
\begin{equation}\label{eq1.03}\begin{split}
\ol{\bigcup_n \ A_n} \ = \ (\bigcup_n \ A_n) \cup \limsup_n \ A_n, \\
\text{where} \quad \limsup_n \ A_n \ = \ \bigcap_N \ \ol{\bigcup_{n \ge N} \ A_n}.
\end{split}\end{equation}
If the $A_n$'s are nonempty, then, by compactness, the limsup is nonempty.

For a dynamical system $(X,f)$ the orbit of a point $x \in X$ is given by
 $\O f(x) = \{ f(x), f^2(x), \dots \}$ and $\om f(x)$ is the set of limit points of the orbit sequence, nonempty by compactness. That is,
\begin{equation}\label{eq1.05}
 \om f(x) \ = \ \bigcap_N \ \ol{\{ f^n(x) : n \ge N \}}.\hspace{2cm}
\end{equation}

We  say that $x$ wanders to $y$, written $y \in \NN f(x)$,
 when for any pair of open sets $U, V$ with $x \in U$ and $y \in V$, there exists $z \in U$ and $k \in \N$
such that $f^k(z) \in V$.

Finally, let $d$ be a metric for $X$. We define the \emph{Conley chain relation} $\CC f$.
For $n \in \N$ and $\ep > 0$ a sequence $\{ x_0, x_1, \dots, x_n \}$ in $X$ is an $\ep$ chain from $x$ to $y$ if
$x = x_0, y = x_n$ and for $k = 1, \dots, n, \ d(f(x_{k-1}), x_k) < \ep$.
\begin{equation}\label{eq1.07}\begin{split}
y \in  \CC f(x) \quad \Longleftrightarrow \hspace{3cm} \\
 \text{For all} \ \ \ep > 0 \ \ \text{there exists an} \ \ep \ \text{chain from} \ x \ \text{to} \ y.
\end{split}\end{equation}
Because the metrics on $X$ are uniformly equivalent, the relation $\CC f$ is independent of the choice of metric.

It is easy to see the inclusions
\begin{equation}\label{eq1.09}
 \O f(x) \cup \om f(x) \ \subset \ \NN f(x) \ \subset \ \CC f(x).
\end{equation}
For the first inclusion, see Theorem \ref{theo1.02} below.

Associated with these relations are various kinds of recurrence for $f$.
\begin{itemize}
\item A point $x$ is a \emph{fixed point} when $f(x) = x$.

\item A point $x$ is a \emph{periodic point} when $x \in \O f(x)$.

\item A point $x$ is a  \emph{recurrent point} when $x \in \om f(x)$.We denote the set of recurrent points
by $Recur_f$.

\item A point $x$ is a   \emph{non-wandering point}  when $x \in \NN f(x)$.

\item A point $x$ is a   \emph{chain recurrent point}  when $x \in \CC f(x)$.

\item A point $x$ is a  \emph{transitive point} for $f$ when $\om f(x) = X$. We denote the,
possibly empty, set of transitive points by $Trans_f$.
\end{itemize}

It is easy to see that  for $\A = \O, \NN$ or $\CC$, we have $y \in \A f(x) \ \Leftrightarrow \ x \in \A (f^{-1})(y)$.
The set $\om (f^{-1})(x)$ is the set of limit points of $\{ f^k(x) \}$ as $k \to - \infty$ and
is labelled $\a f(x)$, and $y \in \om f(x)$ usually does not imply $x \in \a f(y)$. Thus, a point $x$ is periodic, non-wandering or chain recurrent for $f^{-1}$ if and only if it satisfies the corresponding property for $f$, while a point which is recurrent or transitive for $f$ need not be recurrent or transitive for $f^{-1}.$

A set $A \subset X$ is called $f$ \emph{invariant} when $f(A) = A$ and so $A = f^{-1}(A)$.

\begin{df}\label{def1.01} Let $(X,f)$ be a dynamical system.
\begin{itemize}
\item[(a)]The system is called \emph{minimal} when
 every orbit $\O f(x)$ is dense in $X$, or, equivalently,
$Trans_f = X$. The system is minimal if and only if $X$ itself is the only non-empty, closed $f$ invariant subset of $X$.

\item[(b)] The system is called \emph{topologically transitive} when every pair $x, y \in X$ satisfies $y \in \NN f(x)$ and so
whenever   $U, V$ are nonempty open subsets of $X$, there exists $k \in \N$ such that $V \cap f^{-k}(U) \not= \emptyset$
 in which case, the set of such $k$'s is infinite. The system is topologically transitive if and only if
 $Trans_f$ is nonempty, in which  case it is a dense $G_{\d}$  subset of $X$.

 \item[(c)] The system is called \emph{central} when every point $x \in X$ is non-wandering and so
 whenever $U$ is a nonempty open subset of $X$, there exists $k \in \N$ such that $U \cap f^{-k}(U) \not= \emptyset$
 in which case, the set of such $k$'s is infinite. The system is central if and only if the  $G_{\d}$ subset $Recur_f$ is dense in $X$.

 \item[(d)] The system is called \emph{chain transitive} when every pair $x, y \in X$ satisfies $y \in \CC f(x)$.
 The system is chain transitive provided that
 whenever $x$ and $y$  are points of $X$ and $\ep > 0$, there exists an $\ep$ chain from $x$ to $y$. It suffices that there
 exists a point $e \in X$ such that $e \in \CC f(x)$ and $x \in \CC f(e)$ for all $x \in X$.

 \item[(e)] A closed,invariant subset $A$ is called a minimal/topologically transitive/central/chain transitive subset when the
 subsystem $(A, f|A)$ satisfies the corresponding property.
 \end{itemize} \end{df}\vspace{.25cm}

Note that if $f$ is
  minimal, topologically transitive, central or chain transitive, then $f^{-1}$ satisfies the corresponding property.

 By Zorn's Lemma any non-empty, closed $f$ invariant set contains a minimal subset. Any point of a minimal subset is recurrent and so
 any system contains recurrent points. Let $\B$ be a countable basis of non-empty open sets with $\B_n$ the members of $\B$ of diameter
 less than $1/n$. The dense, $G_{\d}$ results in (b) and (c) follow from:
 \begin{equation}\label{eq1.10}\begin{split}
 Trans_f \ = \ \bigcap_{U \in \B, N \in \N} \ \bigcup_{k \ge N} f^{-k}(U). \hspace{1cm}\\
Recur_f \ = \ \bigcap_{n, N \in \N} \ \bigcup_{k \ge N, U \in \B_n} U \cap f^{-k}(U).
 \end{split}\end{equation}

 The sufficiency result in (d) follows from the fact that $\CC f$ is a transitive relation, i.e $e \in \CC f(x)$ and $y \in \CC f(e)$ implies  $y \in \CC f(x)$.

 There are a number of competing definitions of topological transitivity. The alternatives are sorted out in \cite{AC12} and in \cite{BK14}.
Observe that with the definition above, the extension to the one point compactification  $\Z^*$ of the translation $i \mapsto i+1$
defines a system which is not topologically transitive. Thus, a topologically transitive system consists of either
a single periodic orbit or a homeomorphism on a perfect, and so uncountable, space. If $x$ is a recurrent point, then it is a transitive point
for the restriction of $f$ on $\om f(x)$. In particular, if $X$ is countable, then the only recurrent points are periodic. \vspace{.25cm}

\begin{lem}\label{lem1.01a} If a dynamical system $(X,f)$ has a fixed point $e$ as the unique recurrent point, then
$(X,f)$ is chain transitive. \end{lem}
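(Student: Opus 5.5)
By Definition \ref{def1.01}(d) it suffices to show that $e \in \CC f(x)$ and $x \in \CC f(e)$ for every $x \in X$. The plan is to derive both from statements about limit sets. First we show that $\om f(x) = \{ e \}$ and $\a f(x) = \{ e \}$ for every $x$. Then we convert these into $\ep$ chains.

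For the limit sets, fix $x$. The set $\om f(x)$ is nonempty, closed and $f$ invariant. By Zorn's Lemma it contains a minimal subset $M$. Every point of $M$ is recurrent, so $M = \{ e \}$ and $e \in \om f(x)$.

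For the full equality we use a standard fact: every limit point of the orbit lies in some minimal set contained in $\om f(x)$. More precisely, if $y \in \om f(x)$, then $\om f(y) \subset \om f(x)$ contains a minimal set, which must be $\{ e \}$, so $e \in \om f(y)$. This alone does not force $y = e$, and in fact we will not need equality. We only need $e \in \om f(x)$, together with the analogous statement for $f^{-1}$. For $f^{-1}$, the recurrent points are the points $x$ with $x \in \a f(x)$. A minimal set for $f$ is also minimal for $f^{-1}$, since closed invariant sets coincide for the two maps. So $\a f(x)$, being nonempty, closed and invariant, contains the minimal set $\{ e \}$, and $e \in \a f(x)$.

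Now build the chains. Given $\ep > 0$, since $e \in \om f(x)$ there is $n \in \N$ with $d(f^n(x), e) < \ep$. The sequence $x, f(x), \dots, f^{n-1}(x), e$ is then an $\ep$ chain from $x$ to $e$, because the last step satisfies $d(f(f^{n-1}(x)), e) < \ep$. Hence $e \in \CC f(x)$.

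Conversely, since $e \in \a f(x)$ there is $m \in \N$ with $d(f^{-m}(x), e) < \ep'$. Use uniform continuity of $f$ to choose $\ep'$ so that $d(f(f^{-m}(x)), f(e)) = d(f^{-m+1}(x), e) < \ep$. Then $e, f^{-m+1}(x), \dots, f^{-1}(x), x$ is an $\ep$ chain from $e$ to $x$. The first step is within $\ep$ because $f(e) = e$, and the remaining steps are exact. If $m = 1$ the chain is simply $e, x$. Hence $x \in \CC f(e)$.

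The only delicate point is the backward direction. One must use $\a f(x)$ rather than $\om f(x)$, and use the fact that $\{ e \}$ is the unique minimal set for $f^{-1}$ as well. This holds because minimal sets are defined by closed invariant sets, which are the same for $f$ and $f^{-1}$, even though recurrence for $f^{-1}$ may differ from recurrence for $f$. Once this is observed, the chain constructions are routine.
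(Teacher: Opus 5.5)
Your proof is correct and is essentially the paper's argument: $\{ e \}$ is the unique minimal set, so $e \in \om f(x) \cap \a f(x)$ for every $x$, which gives $e \in \CC f(x)$ and $x \in \CC f(e)$, and Definition \ref{def1.01}(d) finishes; the only difference is that you write out the $\ep$ chains by hand, where the paper just invokes the inclusions (\ref{eq1.09}) for $f$ and $f^{-1}$. You should delete the opening announcement that $\om f(x) = \a f(x) = \{ e \}$: it is false in general (in the attachment $aX$ of Theorem \ref{theo1.05} over $X = \Z^*$ the unique recurrent point is $(*,*)$, yet $\om (af)(x_k,k) = X \times \{ * \}$), and, as you note yourself, it is never used.
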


\begin{proof} Since $\{ e \}$ is the unique minimal subset, $e \in \om f(x) \cap \a f(y)$ for all $x, y \in X$. So
(\ref{eq1.09}) implies that $e \in \CC f(x) \cap \CC (f^{-1})(y)$. Hence, $y \in \CC f(e)$ and the result follows from (d) above.
\end{proof}\vspace{.25cm}

If $x$ is a recurrent point, then it is a transitive point for the restriction $(\om f(x), f|\om f(x))$ and so in that case $\om f(x)$ is
a topologically transitive subset. As noted in \cite{A93} Proposition 4.14, it is, in any case, a chain transitive subset. We recall the simple proof.

\begin{theo}\label{theo1.02} Let $(X,f)$ be a dynamical system with $x \in X$.\vspace{.25cm}

The omega limit set $\om f(x)$ is an $f$ invariant, chain transitive subset with $y_2 \in  \NN f(y_1)$ for all
$y_1, y_2 \in \om f(x)$.  In particular, every point of $\om f(x)$ is non-wandering. \end{theo}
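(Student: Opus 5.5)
The plan is to prove, in order: invariance of $\om f(x)$, then the non-wandering relation $y_2 \in \NN f(y_1)$ for all $y_1, y_2 \in \om f(x)$, and finally chain transitivity of the subsystem, which also gives the inclusion $\om f(x) \subset \NN f(x)$ claimed in (\ref{eq1.09}).

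\textbf{Invariance.} Since $f$ is a homeomorphism, $f$ maps $\ol{\{f^n(x) : n \ge N\}}$ onto $\ol{\{f^n(x) : n \ge N+1\}}$. These sets form a decreasing sequence of nonempty compacta, and $f$ commutes with such intersections. Applying $f$ to the intersection in (\ref{eq1.05}) therefore gives $f(\om f(x)) = \om f(x)$.

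\textbf{The relation $\NN$.} Let $y_1, y_2 \in \om f(x)$ and let $U, V$ be open with $y_1 \in U$, $y_2 \in V$. Because $y_1$ is a limit point of the orbit, there is $n_1$ with $f^{n_1}(x) \in U$. Because $y_2$ is also a limit point, there is $n_2 > n_1$ with $f^{n_2}(x) \in V$. Put $z = f^{n_1}(x) \in U$ and $k = n_2 - n_1 \in \N$; then $f^k(z) \in V$, so $y_2 \in \NN f(y_1)$. Taking $y_1 = y_2$ shows every point of $\om f(x)$ is non-wandering. The same argument with $z = f^{n}(x)$ near $x$ itself gives $\om f(x) \subset \NN f(x)$; and $\O f(x) \subset \NN f(x)$ is immediate by taking $z = x$.

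\textbf{Chain transitivity.} This is the main obstacle: the chain must lie \emph{within} $\om f(x)$, while the orbit points used above lie outside it. Fix $\ep > 0$ and $y_1, y_2 \in \om f(x)$. By compactness, $f^n(x)$ eventually stays within $\d$ of $\om f(x)$; otherwise a subsequence would accumulate at a point of $\om f(x)$ lying at distance at least $\d$ from it. Choose $\d < \ep/3$ so small that uniform continuity of $f$ gives $d(f(a), f(b)) < \ep/3$ whenever $d(a,b) < \d$.

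Now pick $N$ beyond this threshold with $f^{N}(x)$ within $\d$ of $f^{-1}(y_1)$; this is possible since $f^{-1}(y_1) \in \om f(x)$ by invariance. Pick $M > N$ with $f^M(x)$ within $\d$ of $y_2$. For $N < i < M$, replace each $f^i(x)$ by a point $w_i \in \om f(x)$ with $d(f^i(x), w_i) < \d$. Set $w_N = y_1$ and $w_M = y_2$.

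It remains to check the chain condition, including at the start. For $N < i \le M$, the triangle inequality gives
\[ d(f(w_{i-1}), w_i) \le d(f(w_{i-1}), f^{i}(x)) + d(f^i(x), w_i) < \ep/3 + \d < \ep. \]
Here the first term uses $d(w_{i-1}, f^{i-1}(x)) < \d$ and uniform continuity. At $i = N+1$ the start point is $w_N = y_1$, and $d(y_1, f^{N+1}(x)) < \ep/3$ because $f^{-1}(y_1)$ is within $\d$ of $f^N(x)$. So $y_1 = w_N, w_{N+1}, \dots, w_M = y_2$ is an $\ep$ chain inside $\om f(x)$, and the subsystem is chain transitive.
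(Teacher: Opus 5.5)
Your invariance argument and your $\NN$ argument are correct. Your strategy for chain transitivity is exactly the paper's: shadow the orbit segment $f^i(x)$, $N \le i \le M$, by points $w_i \in \om f(x)$, then replace the two ends by $y_1, y_2$. However, the first link of your chain is off by one index and fails as written.

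The chain condition at $i = N+1$ is $d(f(w_N), w_{N+1}) = d(f(y_1), w_{N+1}) < \ep$. Your triangle-inequality estimate needs $d(w_N, f^N(x)) = d(y_1, f^N(x)) < \d$ for this. You chose $f^N(x)$ close to $f^{-1}(y_1)$ instead, so what you actually get is $d(y_1, f^{N+1}(x)) < \ep/3$. That puts $y_1$ next to $f^{N+1}(x)$, hence next to $w_{N+1}$. It bounds $d(y_1, w_{N+1})$, not $d(f(y_1), w_{N+1})$.

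The step genuinely fails. Let $\om f(x) = \{p, q\}$ be a periodic orbit of period $2$ (say $x = p$), and take $y_1 = p$ and $\ep < d(p,q)$. Your choice forces $f^N(x) = q$ and $f^{N+1}(x) = p$. Then $w_{N+1}$ is the point of $\{p,q\}$ within $\d$ of $p$, namely $p$; this holds whether $N+1 < M$ or $w_{N+1} = y_2$. So $d(f(w_N), w_{N+1}) = d(q,p) > \ep$.

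The repair is to drop the detour through $f^{-1}(y_1)$. Choose $N$ beyond the threshold with $d(f^N(x), y_1) < \d$ directly, which is possible since $y_1 \in \om f(x)$. Then $d(w_i, f^i(x)) < \d$ holds for every $N \le i \le M$. Your estimate $d(f(w_{i-1}), w_i) < \ep/3 + \d < \ep$ then covers all links uniformly, including the first. This is precisely the paper's proof, which uses $\d < \ep/2$ as an $\ep/2$ modulus of uniform continuity and sets $z_{k_1} = y_1$, $z_{k_2} = y_2$ at orbit times $k_1 < k_2$ where $f^{k_1}(x), f^{k_2}(x)$ are $\d$-close to $y_1, y_2$.
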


\begin{proof} A point $y$ is in $\om f(x)$ if and only if there exists a sequence $\{ k_i \}$ in $\N$ tending to infinity with $\{ f^{k_i}(x) \}$
converging to $y$.  In that case, $\{ f^{k_i+1}(x) \}$ converges to $f(y)$ and  $\{ f^{k_i-1}(x) \}$ converges to $f^{-1}(y)$.
Thus, $\om f(x)$ is $f$ invariant.

Given $\ep > 0$, let $\d < \ep/2$ be an $\ep/2$ modulus of uniform continuity for $f$. From the definition of the limsup and compactness, there exists
$N \in\N$ such that if $k \ge N$, then there exists a point $z_k \in \om f(x)$ with $d(f^k(x),z_k) < \d$. Given $y_1, y_2 \in \om f(x)$
there exist $N \le k_1 < k_2$ such that $d(f^{k_1}(x),y_1) < \d$ and $d(f^{k_2}(x),y_2) < \d$. So we can choose
$z_{k_1} = y_1$ and $z_{k_2} = y_2$. Let $n = k_2 - k_1$. With $u = f^{k_1}(x)$, we have $d(u,y_1), d(f^n(u),y_2) < \ep$. As $\ep$ is arbitrary,
it follows that $(y_1,y_2) \in \NN f$. Furthermore, $\{ y_1 = v_0 = z_{k_1}, v_1 = z_{k_1+1}, \dots, v_n = z_{k_2} = y_2 \} $ is an $\ep$ chain in
$\om f(x)$ from $y_1$ to $y_2$ because $d(z_{k_1+i},f^{k_1+i}(x)) < \d$ and so $d(f(z_{k_1+i}),f^{k_1+i+1}(x)) < \ep/2$ and again
$d(z_{k_1+i+1},f^{k_1+i+1}(x)) < \d < \ep/2$. Thus, $y_2 \in \CC (f|\om f(x))(y_1)$.

\end{proof}\vspace{.25cm}

\section{\textbf{Three Constructions}}\vspace{.5cm}

We first describe an attachment construction  due, I believe,  to Takens.

\begin{df}\label{def1.03} Let $(X,f)$ be a dynamical system. A bi-infinite sequence $\{ x_k : k \in \Z \}$  in $X$ is called a \emph{full, asymptotic
chain} when
\begin{itemize}
\item For every $N \in \N$ the tail sets $\{ x_k : k \ge N \}$ and $\{ x_k : k \le -N \}$ are dense in $X$.

\item The distances $d(f(x_{k-1}),x_k) \to 0$ as $k \to \infty$ and as $k \to -\infty$.
\end{itemize}\end{df}\vspace{.25cm}

\begin{lem}\label{lem1.04} A dynamical system $(X,f)$ is chain transitive if and only if it admits a full, asymptotic chain. \end{lem}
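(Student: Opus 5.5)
For the direction ``full, asymptotic chain implies chain transitive'', I will use the chain directly. Fix $x, y \in X$ and $\ep > 0$, and let $\d < \ep/2$ be an $\ep/2$ modulus of uniform continuity for $f$. Choose $N$ so that $d(f(x_{k-1}),x_k) < \ep/2$ for all $k \ge N$. By density of the tail $\{ x_k : k \ge N \}$, pick $j \ge N$ with $d(x_j,x) < \d$. Since $\{ x_k : k \ge j+2 \}$ is also a tail and hence dense, pick $m \ge j+2$ with $d(x_m,y) < \ep$. Then
\[
x, x_{j+1}, x_{j+2}, \dots, x_{m-1}, y
\]
is an $\ep$ chain from $x$ to $y$. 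The first step holds because $d(f(x),f(x_j)) < \ep/2$ and $d(f(x_j),x_{j+1}) < \ep/2$. The interior steps hold by the choice of $N$. For the last step, $d(f(x_{m-1}),y) \le d(f(x_{m-1}),x_m) + d(x_m,y)$; to make this less than $\ep$ I simply require $d(x_m,y) < \ep/2$. Only the positive tail is needed here.

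For the converse, assume $(X,f)$ is chain transitive. Fix a sequence $\ep_n \downarrow 0$ and a sequence $\{ p_n : n \in \N \}$ in which each point of a countable dense subset of $X$ appears infinitely often. I will build the positive half $\{ x_k : k \ge 0 \}$ by concatenation. Start at $x_0 = p_1$. At stage $n$, append an $\ep_n$ chain from the current endpoint $p_n$ to $p_{n+1}$; chain transitivity guarantees such a chain exists. Adjacent blocks share their endpoint, so I drop the duplicate. The resulting sequence satisfies $d(f(x_{k-1}),x_k) < \ep_n$ on block $n$, so the errors tend to $0$ as $k \to \infty$. Each $p_n$ appears at arbitrarily large indices, so every tail is dense.

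The negative half is built the same way using chains running backward toward $x_0$. At stage $n$, take an $\ep_n$ chain from $p_{n+1}$ to the current leftmost point and prepend it. Equivalently, I can use chain transitivity for $f$ with the roles of source and target swapped; no appeal to $f^{-1}$ is needed, since $\CC f$ relates every pair. Indexing these prepended blocks by negative integers gives $d(f(x_{k-1}),x_k) \to 0$ as $k \to -\infty$, and each $p_n$ recurs infinitely often in the negative tail. The junction at index $0$ involves a single step, which is harmless because only the limits at $\pm\infty$ matter.

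The only step requiring care is the bookkeeping. The chain lengths grow without bound, so the indexing must be made consistent. The main potential worry is degenerate chains: an $\ep$ chain has length $n \ge 1$, so the sequence is genuinely infinite in both directions. I expect no real obstacle beyond this.
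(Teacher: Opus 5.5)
Your proposal is correct and follows essentially the same route as the paper. The forward direction splices $x$ and $y$ onto a far-out segment of the chain using a uniform-continuity modulus (with your corrected requirement $d(x_m,y) < \ep/2$), and the converse inserts chains of mesh tending to zero between consecutive terms of a sequence with dense tails. The only differences are cosmetic: you get the dense tails from a countable dense set enumerated with infinite repetition and build the two halves separately, whereas the paper concatenates finite $\ep$-dense sequences into one bi-infinite sequence and then inserts $1/(|k|+1)$ chains.
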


\begin{proof} If $\{ x_k : k \in \Z \}$ is a full asymptotic chain, $y_1, y_2 \in X$, and $\d < \ep/2$ an $\ep/2$ modulus of uniform
continuity for $f$, then we can choose $N$ so that  $d(f(x_{k-1}),x_k) < \d$ for all $k > N$. By the density condition, there exist $k_1, k_2$ with
$N \le k_1  < k_2 - 2$ and such that $d(x_{k_1},y_1) < \d$ and $d(x_{k_2},y_2) < \d$. Then $\{ y_1, x_{k_1+1}, \dots x_{k_2 - 1}, y_2 \}$
is an $\ep$ chain from $y_1$ to $y_2$.

A compact metric space is totally bounded and so there exists for every $\ep > 0$ a finite sequence in $X$ which is $\ep$ dense,
i.e. it passes within $\ep$ of every point of $X$.
By concatenating such finite sequences it is easy to obtain a bi-infinite sequence $\{ y_k : k \in \Z \}$ with dense tails. If $(X,f)$ is chain
transitive, then we can insert between $y_{k}$ and $y_{k+1}$ a $1/(|k| + 1)$ chain from $y_{k}$ and $y_{k+1}$. After renumbering we obtain
a full asymptotic chain.

\end{proof}\vspace{1cm}

For a chain transitive dynamical system $(X,f)$ we fix a full asymptotic chain $\{ x_k : k \in \Z \}$ and
define $aX$ to be the following closed subset of $X \times \Z^*$.
 \begin{equation}\label{eq1.11}
 aX \ = \ (X \times \{ * \}) \cup \{ (x_k,k) : k \in \Z \}.
 \end{equation}
 The map $af$ on $aX$ is defined by
 \begin{equation}\label{eq1.12}
 af(x,*) \ = \ (f(x),*), \qquad af(x_k,k) = (x_{k+1},k+1),
 \end{equation}
 for all  $ x \in X, k \in \Z$.

Clearly, the restriction of $af$ to the subset $X \times \{ * \}$ defines  a subsystem of $(aX, af)$ which is isomorphic to $(X,f)$.
 \vspace{.25cm}

 \begin{theo}\label{theo1.05} Let $(X,f)$ be a chain transitive dynamical system with a full asymptotic chain $\{ x_k : k \in \Z \}$.
 The \emph{attachment pair }$(aX, af)$ is a dynamical system which satisfies the following conditions.

 \begin{itemize}
 \item[(i)] If $X$ is countable, then $aX$ is countable.

 \item[(ii)]  For any $k \in \Z$ the limit sets $\om (af)(x_k,k)$ and $\a(af)(x_k,k)$ equal $ X \times \{ * \}$.

 \item[(iii)]  The system $(aX, af)$ is chain transitive.

 \item[(iv)]  The set $\{ (x_k,k) : k \in \Z \} $ is the set of isolated points of $aX$.

 \item[(v)]  The set $ X \times \{ * \} $ is the set of non-wandering points for $af$.
 \end{itemize}\end{theo}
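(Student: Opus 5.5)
We first check that $aX$ is closed and $af$ is a homeomorphism. For closedness, suppose $(x_{k_i},k_i)$ converges to a point outside the listed set. If $k_i$ takes some value infinitely often, the limit is that $(x_k,k)$. Otherwise $|k_i|\to\infty$, so $k_i \to *$ in $\Z^*$ and, by compactness of $X$, the limit lies in $X\times\{*\}$. Hence $aX$ is compact.

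Next, $af$ is clearly a bijection, with inverse $(x,*)\mapsto (f^{-1}(x),*)$ and $(x_k,k)\mapsto (x_{k-1},k-1)$. Continuity only needs checking at points $(x,*)$, since the points $(x_k,k)$ are isolated: the ball of radius less than $1/(|k|+1)$ about $k$ in $\Z^*$ is $\{k\}$. Suppose $(x_{k_i},k_i)\to (x,*)$, so that $|k_i|\to\infty$ and $x_{k_i}\to x$. Then $af(x_{k_i},k_i) = (x_{k_i+1},k_i+1)$, and
$$d(x_{k_i+1},f(x)) \le d(x_{k_i+1},f(x_{k_i})) + d(f(x_{k_i}),f(x)) \to 0,$$
using the asymptotic condition of Definition \ref{def1.03} and continuity of $f$. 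The same argument with $f^{-1}$, using $d(f^{-1}(x_k),x_{k-1})\to 0$, gives continuity of the inverse. This last limit follows from $d(f(x_{k-1}),x_k)\to 0$ by uniform continuity of $f^{-1}$. A continuous bijection of a compact space is then a homeomorphism in any case.

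With this in hand, (i) is immediate, since $aX$ is the union of a copy of $X$ and a countable set. For (iv), each $(x_k,k)$ is isolated as noted above. No point $(x,*)$ is isolated, because the density of the tails gives $k_i\to\infty$ with $x_{k_i}\to x$, so $(x_{k_i},k_i)\to(x,*)$.

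For (ii), $af^n(x_k,k)=(x_{k+n},k+n)$. Its limit points as $n\to\infty$ lie in $X\times\{*\}$, and they include all of $X\times\{*\}$ by density of the tails $\{x_j:j\ge N\}$. The case $n\to-\infty$ is symmetric. For (iii), the only recurrent points in the relevant sense are not needed. Instead we use criterion (d) with $X\times\{*\}$ as a hub. Every point $(x_k,k)$ has $\om$ and $\a$ limit sets meeting $X\times\{*\}$, so by (\ref{eq1.09}) it chains to and from points of $X\times\{*\}$. Points of $X\times\{*\}$ chain to one another because $(X,f)$ is chain transitive, and chains in $X\times\{*\}$ are chains in $aX$ since the metric on $X\times\{*\}$ is the product metric restricted. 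Transitivity of $\CC$ then gives chain transitivity of $aX$.

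For (v), an isolated point $(x_k,k)$ is non-wandering only if it is periodic. It is not periodic, since the second coordinate strictly increases; so it is wandering. Conversely, fix $(y,*)$ and a neighborhood $U$ of it. We need $z\in U$ and $n\ge1$ with $af^n(z)\in U$. Using density of the tails, pick $k_1<k_2$, both large, with $x_{k_1},x_{k_2}$ close to $y$. Then $(x_{k_1},k_1)$ and $(x_{k_2},k_2)$ both lie in $U$, because large $|k|$ makes the $\Z^*$ distance to $*$ small. Since $af^{k_2-k_1}(x_{k_1},k_1)=(x_{k_2},k_2)$, this gives the required $z$ and $n$.

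The main obstacle is the continuity of $af$ and its inverse at points of $X\times\{*\}$. This is the only place where the asymptotic condition of the chain is genuinely used, and it needs the two-sided version handled for the inverse. The remaining items are routine once the density of both tails is used.
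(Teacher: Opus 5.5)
Your proposal is correct and follows essentially the same route as the paper. Continuity at the points of $X \times \{*\}$ comes from the asymptotic condition, (ii) and (iv) from density of the tails, (iii) from chaining through $X \times \{*\}$ via the $\om$ and $\a$ limit sets plus chain transitivity of $(X,f)$, and (v) from the fact that isolated non-periodic points wander.

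The only differences are cosmetic. You check continuity with sequences rather than an $\ep$--$\d$ estimate, verify closedness of $aX$ explicitly, and prove the non-wandering half of (v) directly via the return $af^{k_2-k_1}(x_{k_1},k_1) = (x_{k_2},k_2)$ instead of citing Theorem \ref{theo1.02}, which is the same argument unrolled.
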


 \begin{proof} The space $aX$ is compact and metrizable because it is a closed subset of the compact metrizable space $X \times \Z^*$.
 Continuity of $af$ is clear at the points of $\{ (x_k,k) \}$ as these are
 isolated. Since the restriction of $af$ on $X \times \{ * \}$ is continuous it suffices to observe that, using the max metric on the product,
  \begin{equation}\label{eq1.13}\begin{split}
d((x,*),(x_k,k)) = \max(d(x,x_k), 1/(|k|+1)) \hspace{2cm}\\
d((f(x),*),(x_{k+1},k+1)) = \max(d(f(x),x_{k+1}),1/(|k+1|+1))
\end{split}\end{equation}

Given $\ep > 0$, let $\d $ be an $\ep/2$ modulus of uniform continuity for $f$. Let $N \in \N$ be large enough that $1/(N+1) < \ep$ and
$d(f(x_k),x_{k+1}) < \ep/2$ when $|k| \ge N$. If $d((x,*),(x_k,k)) < \min(\d, 1/(N+1))$, then $d(x,x_k) < \d$ and $|k| > N$.
Since $d(f(x),f(x_k)) < \ep/2$, it follows that $d((f(x),*),(x_{k+1},k+1)) \le \ep$. So continuity at $(x,*)$ holds.

It is clear that $af$ is a bijection and so is a homeomorphism.

 The countability claim in (i) is obvious.

The limit set results of (ii) follow because of the density of the tails in the sequence $\{ x_k \}$.

Fix $y \in X$. Since $(y,*) \in \om (af)(x_k,k) \cap \a (af)(x_k,k)$ it follows as in the proof of Lemma \ref{lem1.01a} that
$(y,*) \in \CC (af)(x_k,k)$ and $(x_k,k) \in \CC (af)(y,*)$. If $(X,f)$ is chain transitive, then for all $x \in X$,
$(y,*) \in \CC (af)(x,*)$ and $(x,*) \in \CC (af)(y,*)$.
So (iii) follows from  Definition \ref{def1.01}   (d).

The points $(x_k,k)$ are clearly isolated and the remaining points, those of $X \times \{ * \}$, are in an omega limit set and so
are not isolated in $aX$. Hence, (iv).

From Proposition \ref{theo1.02} and (ii) it follows that the points of $X \times \{ * \}$ are non-wandering. An isolated point is non-wandering only
when it is periodic and so the points $(x_k,k)$ are wandering.

\end{proof}\vspace{.25cm}

We pause to observe the following result. In the thesis of Perez Flores \cite{PF} this is referred to as the Bowen-Sarkovskii Theorem. It is proved in \cite{DF} and in \cite{S65} although in those locations an alternative condition, equivalent to chain transitivity, is used.

 \begin{cor}\label{cor1.06} A dynamical system $(X,f)$ is chain transitive if and only if it is isomorphic to the restriction to the omega limit set
 of a point in some larger system. \end{cor}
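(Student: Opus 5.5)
The statement has two directions, and the two preceding results give them almost directly.

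For the forward direction, assume $(X,f)$ is chain transitive. By Lemma \ref{lem1.04} we may fix a full asymptotic chain $\{ x_k : k \in \Z \}$ and form the attachment system $(aX, af)$ of (\ref{eq1.11}) and (\ref{eq1.12}). Theorem \ref{theo1.05} shows that this is a dynamical system. By part (ii) of that theorem, for any $k$ we have
\[
\om (af)(x_k,k) \ = \ X \times \{ * \}.
\]
The restriction of $af$ to $X \times \{ * \}$ is $(x,*) \mapsto (f(x),*)$. So the projection $(x,*) \mapsto x$ is an isomorphism of $(X \times \{ * \}, af|X \times \{ * \})$ onto $(X,f)$. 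Thus $(X,f)$ is isomorphic to the restriction of the larger system $(aX,af)$ to the omega limit set of the point $(x_0,0)$.

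For the converse, suppose there is a dynamical system $(Y,g)$, a point $y \in Y$, and an isomorphism $h$ from $(\om g(y), g|\om g(y))$ onto $(X,f)$. Theorem \ref{theo1.02} says $\om g(y)$ is a closed $g$ invariant, chain transitive subset. So the subsystem $(\om g(y), g|\om g(y))$ is chain transitive in the sense of Definition \ref{def1.01}(d)--(e).

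It remains to see that chain transitivity is preserved by the isomorphism $h$. This is the only step that needs any care. Since $h$ is a homeomorphism between compact metric spaces, it is uniformly continuous. Given $\ep > 0$, pick $\d$ with the property that $d(a,b) < \d$ implies $d(h(a),h(b)) < \ep$. Using $h \circ g = f \circ h$, an $\d$ chain for $g$ on $\om g(y)$ from $h^{-1}(x)$ to $h^{-1}(x')$ is carried by $h$ to an $\ep$ chain for $f$ from $x$ to $x'$. Alternatively, one can transport the metric along $h$, since $\CC f$ does not depend on the choice of metric. Either way $(X,f)$ is chain transitive.

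So the whole argument reduces to Lemma \ref{lem1.04} and Theorem \ref{theo1.05}(ii) in one direction and Theorem \ref{theo1.02} in the other. I expect no real obstacle; the only nontrivial input is the existence of the full asymptotic chain, and Lemma \ref{lem1.04} already provides it.
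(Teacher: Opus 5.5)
Your proposal is correct and follows the paper's argument. The forward direction is Lemma \ref{lem1.04} together with Theorem \ref{theo1.05}(ii) applied to the attachment $(aX,af)$, and the converse is Theorem \ref{theo1.02}; you also spell out two details the paper's one-line proof leaves implicit: a full asymptotic chain must first be fixed, and chain transitivity is carried across the isomorphism $h$ by its uniform continuity.
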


  \begin{proof} This is immediate from Theorems \ref{theo1.02}  and  \ref{theo1.05}.

\end{proof}\vspace{.5cm}

Given a dynamical system $(X,f)$ and $N \in \N$ the $N$-fold discrete suspension is $(X_N,f_N)$ with
\begin{equation}\label{eq2.01}\begin{split}
X_N \ = \ X \times \{ 1, \dots, N \}, \quad \text{and with} \ \ f_N \ \ \text{defined by} \\
f_N(x,i) \ = \ \begin{cases} (x,i+1) \ \ \text{for} \ \ i < N, \\ (f(x),1)  \ \ \text{for} \ \ i = N.\end{cases}\hspace{2cm}
\end{split}\end{equation}

Instead, what we will need is the stretched suspension construction.

A \emph{pointed dynamical system} $(X,f)$ is one with base point a specified fixed point $e$.

Let $(X,f)$ be a pointed dynamical system with base point $e$ not isolated.
Assume that $X$ is zero-dimensional, i.e.
the clopen subsets form a basis for $X$. Let $\{ V_1, V_2, \dots \}$ be a decreasing sequence of
clopen subsets of $X$ with $V_1 = X$ and with $\bigcap_k \ V_k \ = \ \{ e \}$. For $k \in \N$ let $D_k = V_k \setminus V_{k+1}$.
Since $e$  is not isolated, infinitely many of these are nonempty and so we can discard the empties and re-number. Thus,
we obtain a partition  $\{ D_k : k \in \N \}$ of $X \setminus \{ e \}$ by nonempty subsets which are clopen in $X$ such that
 $\limsup_k \  D_k \ = \ \{ e \}$. For any $N \in \N$, the clopen set $\bigcup_{k < N} D_k$ does not contain $e$ and
 so its complement in $X, \  V_N = \ol{\bigcup_{k \ge N} D_k },$ is a clopen neighborhood of $e$.

For the pointed, zero-dimensional dynamical system $(X,f)$, base point $e$ and the partition $\{ D_k \}$ we
define $sX$ to be the following subset of $X \times \Z^*$.
 \begin{equation}\label{eq2.02}
 sX \ = \  (\bigcup_{k \in \N}  \  D_k \times [-k,k]) \ \cup \ (\{ e \} \times Z^*),
 \end{equation}
 where $[-k,k]$ is the interval $\{ -k,  -k+1, \dots, k \}$ in $\Z$.

 The map $sf$ on $sX$ is defined so that if $x \in D_k$ and $f(x) \in D_{\ell}$
 \begin{equation}\label{eq2.03}
sf(x,i) \ = \ \begin{cases} (x,i+1) \ \ \text{for} \ \  i < k, \\ (f(x),-\ell)  \ \ \text{for} \ \  i = k.\end{cases}
 \end{equation}
For all $i \in \Z^*$
 \begin{equation}\label{eq2.04}
sf(e,i) \ = \ (e,i+1), \hspace{3cm}
\end{equation}
with $* + 1 = *$.

If $X$ is the trivial pointed system so that $X = \{ e \}$, then by convention we define $sX = s\{ e \} = \{ e \} \times Z^*$  and
with $sf(e, i) = (e, i+1)$ for all $i \in \Z^*$.

Define $T : X \times \Z^* \to X \times Z^*$ by $T(x,i) = (x,-i)$ with $-* = *$ so that $T^2 = id_{sX}$. \vspace{.25cm}

\begin{theo}\label{theo2.01} Let  $(X,f)$ be a pointed, zero-dimensional dynamical system with a non-isolated base point $e$
 and with a partition of $X \setminus \{ e \}$ by a sequence $\{ D_k : k \in \N \}$ of nonempty subsets, clopen in $X$,
  such that $limsup_k \  D_k \ = \ \{ e \}$.
The \emph{stretched suspension} $(sX,sf)$ is a pointed, zero-dimensional dynamical system with base point $(e,*)$
which satisfies the following conditions, where $\pi : sX \to X$ is the
projection to the first coordinate.
 \begin{itemize}
 \item[(i)] The space $sX$ is invariant for the homeomorphism $T$ and the homeomorphism $sf$ on $sX$ satisfies
 $s(f^{-1}) \circ T = T \circ (sf)^{-1}$.

  \item[(ii)]If $X$ is countable, then $sX$ is countable.

 \item[(iii)] If $x \in D_k$ and $i \in [-k,k]$, then
 \begin{equation}\label{eq2.05}\begin{split}
 \pi^{-1}(\om f(x)) \ = \ \om (sf)(x,i), \hspace{2cm}\\
  \pi^{-1}(\a f(x)) \ = \ \a (sf)(x,i), \hspace{2cm}\\
\pi^{-1}(\NN f(x)) \ = \ \NN (sf)(x,i), \hspace{2cm}\\
\pi^{-1}(\CC f(x)) \ = \ \CC (sf)(x,i). \hspace{2cm}
\end{split}\end{equation}
In particular, if $x$ is recurrent, non-wandering or chain recurrent for $f$, then $(x,i)$ satisfies the
corresponding property for $sf$.

\item[(iv)] For all $i \in \Z, \ \ \om (sf)(e,i) \ = \ \a (sf)(e,i) \ = \ \{(e, *) \}$.

\item[(v)] If $x \in D_k$ and $i \in [-k,k]$, then $(x,i)$ is an isolated point of $sX$ if and only if $x$ is an isolated point of $X$.

\item[(vi)] If $(X,f)$ is topologically transitive or chain transitive, then $(sX, sf)$ satisfies the corresponding condition.

\item[(vii)] If $A$ is a closed $f$ invariant subset of $X$ with $e \in A$ and not isolated in $A$, then using
the sequence $\{ D_k \cap A : k \in \N \}$ we obtain the closed $sf$ invariant subset $sA$ with $s(f|A)$ the restriction of $sf$.
If instead $A = \{ e \}$, then $sA$ is by convention a closed $sf$ invariant subset $sA$ with $s(f|A)$ the restriction of $sf$.

\end{itemize}\end{theo}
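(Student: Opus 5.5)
The plan is to first settle that $sX$ is closed, compact and zero-dimensional and that $sf$ is a homeomorphism, and then read off (i)--(vii) from the explicit formulas \eqref{eq2.03}, \eqref{eq2.04}.

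\emph{Closedness and compactness.} Since $sX \subset X \times \Z^*$, it suffices to show $sX$ is closed. Take a limit $(y,j)$ of points $(x_n,i_n) \in sX$.
\begin{itemize}
\item If $y \neq e$, then $y \in D_k$ for some $k$. As $D_k$ is clopen, eventually $x_n \in D_k$, so $|i_n| \le k$ and $(y,j) \in D_k \times [-k,k]$.
\item If $y = e$, the point $(e,j)$ lies in $sX$ for every $j \in \Z^*$.
\end{itemize}
Zero-dimensionality is inherited from the product. The point $(e,*)$ is fixed by \eqref{eq2.04}, so it serves as the base point.

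\emph{Continuity.} The key structural fact is that each fiber $D_k \times \{i\}$ is clopen in $sX$. On that fiber $sf$ acts either as the shift $x \mapsto (x,i+1)$, or (when $i = k$) as $x \mapsto (f(x),-\ell)$ with $\ell$ locally constant, because $f^{-1}(D_\ell) \cap D_k$ is clopen. So continuity is automatic away from the points $(e,j)$. At $(e,j)$ with $j \in \Z$, a neighbourhood $V_N \times \{j\}$ with $N > |j|$ maps by the shift to $V_N \times \{j+1\}$, which is continuous.

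The main obstacle is continuity at $(e,*)$. Points near $(e,*)$ are either $(x,i)$ with $|i|$ large, which forces $x \in D_k$ with $k$ large and hence $x$ close to $e$, or else $(e,i)$. The image of $(x,i)$ is either $(x,i+1)$, still with $|i+1|$ large, or $(f(x),-\ell)$ with $i = k$. In the second case $x$ is near $e$, so $f(x)$ is near $e$, so $\ell$ is large and $1/(\ell+1)$ is small. Here I will use that $\limsup D_k = \{e\}$: a small neighbourhood of $e$ meets only $D_\ell$ with large $\ell$, because $V_N$ are neighbourhoods of $e$ that exhaust.

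Bijectivity is clear from the formula: the inverse sends $(y,-\ell)$ with $y \in D_\ell$ to $(f^{-1}(y),k)$ where $f^{-1}(y) \in D_k$, and shifts otherwise. A continuous bijection of a compact metric space is a homeomorphism.

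For (i), I will compute $T \circ (sf)^{-1}$ directly against $s(f^{-1})$, using the same partition. Then (ii) is immediate.

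For (iii), orbits of $sf$ project to orbits of $f$, each point of $D_k$ being traversed through $2k+1$ levels. If $f^{n_m}(x) \to y$, then:
\begin{itemize}
\item for $y \neq e$, the fiber of $y$ is finite, and every level of it is hit near the corresponding times;
\item for $y = e$, the visits to $D_\ell$ with $\ell \to \infty$ pass through every level $j$ (since $\ell > |j|$ eventually) and also approach $(e,*)$, giving all of $\{e\} \times \Z^*$.
\end{itemize}
Conversely, limits project into $\om f(x)$ by continuity of $\pi$, using $\pi \circ sf \in \{\pi, f\circ \pi\}$. For $\a$, apply the same argument to the inverse via (i). For $\NN$ and $\CC$, I will lift $\ep$-chains and perturbations level by level and project them down. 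The relevant observation is that a small jump in $sX$ near a fiber $D_k\times\{i\}$ stays in a single clopen fiber except near $\{e\}\times\Z^*$, where the whole fiber $\pi^{-1}(e)$ is chain-reachable.

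Item (iv) is direct. Item (v) follows from the clopen fibers. For (vi), I will use (iii) together with a transitive point, or in the chain case use (iii) with $e$ and the criterion of Definition \ref{def1.01}(d). Item (vii) is by construction.
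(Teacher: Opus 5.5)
Your treatment of the topology and of (i), (ii), (iv)--(vii) matches the paper's proof. That covers closedness from the clopen $D_k$ and the limsup condition, the clopen fibers $D_k\times\{i\}$, continuity at $(e,j)$ and $(e,*)$, the explicit inverse, and (vi) from (iii), (iv) and Definition \ref{def1.01}(d). Getting the $\alpha$ case from the $\om$ case via $T$ also works.

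\textbf{The gap.} It is the last step of (iii), where you propose to ``project down'' $\ep$-chains and perturbations to get $\NN(sf)(x,i)\subset\pi^{-1}(\NN f(x))$ and $\CC(sf)(x,i)\subset\pi^{-1}(\CC f(x))$. These inclusions are false in general, so this step cannot be carried out.
\begin{itemize}
\item The relation $\pi\circ sf\in\{\pi,f\circ\pi\}$ is enough for $\om$, because along an orbit the number of genuine $f$-steps tends to infinity.
\item A witness for $\NN$, or an $\ep$-chain, may instead consist entirely of ``stutter'' steps $(w,m)\mapsto(w,m+1)$. These project to no $f$-step at all.
\item Concretely, let $x\in D_k$ and $-k\le i<k$. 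Then $(x,i+1)=sf(x,i)\in\O(sf)(x,i)\subset\NN(sf)(x,i)\subset\CC(sf)(x,i)$. Yet $(x,i+1)\notin\pi^{-1}(\CC f(x))$ whenever $x$ is not chain recurrent.
\item For an example, take the two-point compactification $\Z\cup\{-\infty,+\infty\}$ with $f(n)=n+1$, $e=+\infty$, $D_1=\{-\infty\}\cup\{n\le 0\}$ and $D_k=\{k-1\}$ for $k\ge 2$. Then $(0,0)\in\NN(sf)(0,-1)$, while $\CC f(0)=\{1,2,\dots\}\cup\{+\infty\}$.
\end{itemize}

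\textbf{What actually holds.} The equalities for $\NN$ and $\CC$ in (iii) overclaim. The paper's proof in fact establishes only the lifting inclusions $\pi^{-1}(\NN f(x))\subset\NN(sf)(x,i)$ and $\pi^{-1}(\CC f(x))\subset\CC(sf)(x,i)$, together with both directions for $\om$. The stutter points are the only discrepancy in the reverse direction:
\[
\NN(sf)(x,i)=\pi^{-1}(\NN f(x))\cup\{(x,i') : i<i'\le k\},
\]
and likewise for $\CC$. Hence equality holds when $i=k$, or when $x$ is non-wandering (resp.\ chain recurrent).

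Nothing downstream needs more than this. The ``in particular'' clause, your argument for (vi), and the use of (iii) in Theorem \ref{theo3.04}(b) rely only on the lifting direction and on the $\om$ and $\alpha$ equalities. So keep your lifting argument, including the case $y=e$, where points close to $e$ lie in $D_\ell$ with $\ell>|j|$. Drop the projection claim for $\NN$ and $\CC$, or replace it with the corrected statement above.
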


\begin{proof}  Each member of the sequence $\{ D_k \times [-k,k] \}$ is a clopen subset of $X \times \Z^*$
and the limsup of the sequence is $\{ e \}\times \Z^*$.
Hence, $sX$ is a closed subset of the zero-dimensional, compact set $X \times \Z^*$.

If $x \in D_k$ and $f(x) \in D_{\ell}$ then $D_k \cap f^{-1}(D_{\ell})$ is a clopen neighborhood of $x$ and so
continuity of $sf$ on each clopen set $D_k \times \{ i \}$ with $|i| \le k$ is clear.

From continuity of $f$ and the limsup condition, it
follows that for every $\ep > 0$  there exists $\d > 0$ such that
$d(x,e) < \d$ implies $d(f(x),e) < \ep$ and $x \in D_k$ with $\max(1/(|k| + 1), 1/(|\ell| + 1)) < \ep$.

For continuity at $(e,i)$ we note that if $d((x,j),(e,i)) < \min(\d, 1/(|i|+1))$,
then $j = i$ and so $|i| \le k$.  If $-k \le i < k$, then
$sf(x,i) = (f(x),i+1)$ is within $\ep$ of $(e,i+1)$. If $i = k$, then $sf(x,k) = (f(x),\ell)$ is within $\ep$ of $(e,k+1)$.

For continuity at $(e, *)$ we note that if $d((x,i),(e,*)) < \d$,
then $1/(|i|+1) < \d$ and so whether $i < k$ or $i = k$, $sf(x,i)$  is within $\ep$ of $(e,*)$.

So continuity of $sf$ at the points of $\{ e \}\times \Z^*$ follows.

If  $f^{-1}(x) \in D_{\ell'}$, then $(sf)^{-1}(x,-k) = (f^{-1}(x),\ell')$. Thus, $sf$ is bijective and so is
a homeomorphism.

The $T$ identity in (i) is easy to check. Again the countability condition in (ii)  is obvious.

The map $\pi$ does not map $sf$ to $f$, but maps the orbit of $(x,i)$ to a stuttering version of the orbit of $x$. That is, repeats of
$x$ until $i$ reaches $k$, then $2 \ell + 1$ repeats of $f(x)$ and so on.

To prove (iii) suppose $y \in D_{\ell'} \cap \om f(x)$. If $|j| \le \ell'$ and $\ep > 0$ is small enough that the $\ep$ ball centered at $y$ is contained
in $D_{\ell'}$, then there exists $p\in \N$ such that $d(f^p(x),y) < \ep$, and so $f^p(x) \in D_{\ell'}$. Thus, $(f^p(x),j)$ is a point of the $sf$
orbit of $(x,i)$ which is $\ep$ close to $(y,j)$.

If $e \in \om f(x)$ and $j \in \Z$, then if $\ep > 0$ is small enough, $d(y,e) < \ep$ will imply $y \in D_{\ell'}$
with $\ell' > |j|$. Hence, again if $d(f^p(x),e) < \ep$, then $(f^p(x),j)$ is in the $sf$ orbit of $(x,i)$. Hence, $(e,j) \in \om (sf)(x,i) $
for all $j \in \Z$. Since, $\om (sf)(x,i)$ is closed it contains $(e,*)$ as well.

Next assume $y \in D_{\ell'} \cap \NN f(x)$. For $\ep > 0$ small enough, the $\ep$ ball about $x$ is contained in $D_k$ and the
$\ep$ ball about $y$ is contained in $D_{\ell'}$. If $d(z,x) < \ep$ and $d(f^p(z),y) < \ep$, then $(f^p(z),j)$ is on
the $sf$ orbit of $(z,i)$ and is $\ep$ close to $(y,j)$. In the $y = e$ case we choose $\ep$ so that $d(u,e) < \ep$ implies $u \in D_{\ell'}$
with $\ell' > |j|$ and proceed as before. Again $\NN (sf)(x,i)$ is closed and so we pick up $(e,*)$.

Finally, if $y \in D_{\ell'} \cap \CC f(x)$, then we choose $\ep > 0$ small enough $\ep$ ball about $y$ is contained in $D_{\ell'}$.
 If $\{ x = x_0, \dots, x_p = y \}$ is an $\ep$ chain from $x$ to $y$, then we run along the
 orbit of $(x_0,i)$ continuing to $(f(x_0),-1)$ and then jumping to $(x_1,0)$.
Each jump occurs at the $0$ level until $f(x_{p-1})$ which is $\ep$ close to $y$ and so is in $D_{\ell'}$, then
with $x_{p-1} \in D_{\ell"}$ we jump from $(x_{p-1},\ell")$ to $(y,-\ell')$. Observe that $sf(x_{p-1},\ell") = (f(x_{p-1}),-\ell')$. We then run up to $(y,j)$. We leave the
argument for $e \in \CC f(x)$ to the reader as it follows the patterns above.

Items (iv) and (v) are obvious.

If $x \in Trans_f$, then by (iii), $(x,i) \in Trans_{sf}$. If $(X,f)$ is chain transitive, then by (iii) and (iv), for every $x \in X$,
$(e,*) \in \CC (sf)(x,i)$ and $(e,*) \in \CC (sf)^{-1}(x,i)$. So $(sX,sf)$ is chain transitive by (d) of Definition \ref{def1.01}.

Finally, (vii) is easy to check.

\end{proof}\vspace{.25cm}

The stretched suspension is a special case of the spin construction given in \cite{AG19} Section 1.5.\vspace{.25cm}

Our third construction is the pointed union.

Let $\{ (X_i,f_i) : i \in I \}$ be an indexed collection of pointed dynamical systems with $e_i$ the base point of $(X_i,f_i)$ and with
$I$ a countable index set. Let $Z = (\bigcup_i \ X_i)^*$ be the one point compactification of the disjoint union of the $X_i$'s. To assure disjointness
we can use $X_i \times \{ i \}$ but we will assume that we needn't bother. Using $f_i$ on each $X_i$ we obtain a homeomorphism $f_Z$ on $Z$. The
set $E = \{ * \} \cup \{ e_i : i \in I \}$ is a closed $f_Z$ invariant set. We then smash $E$ to a point $e$. That is,
we take the quotient space of $Z$ via the closed equivalence relation $(E \times E) \cup \{ (z,z) : z \in Z \}$. We denote the quotient space
by $\vee_i \ X_i$  and the homeomorphism induced from $f_Z$ by $\vee_i f_i$. We write $\vee_i (X_i,f_i)$ for this pointed union
$(\vee_i X_i, \vee_i f_i)$.
We identify $X_i$ with its copy in $\vee_i X_i$.

As an alternative description, we begin with the product dynamical system $(\prod_i  X_i, \prod_i  f_i)$ and let $\vee_i X_i$ be the
closed, invariant subset consisting
of those points $x$ such that $x_i \not= e_i$ for at most a single $i \in I$. The base point is the point $x = e$ with $x_i = e_i$ for all $i$.
%
\vspace{.25cm}

\begin{theo}\label{theo2.02} Let $\{ (X_i,f_i) : i \in I \}$ be a countably indexed collection of
pointed dynamical systems with $e_i$ the base point of $(X_i,f_i)$. The \emph{pointed union} $\vee_i (X_i,f_i)$
is a pointed dynamical system with base point $e$ which satisfies the following conditions.
 \begin{itemize}
 \item[(i)] The homeomorphism  $\vee_i f_i$  on $\vee_i X_i$ satisfies
 $(\vee_i f_i)^{-1} \ = \ \vee_i (f_i^{-1})$.

\item[(ii)] If each $X_i$ is countable, then $\vee_i X_i$ is countable.

 \item[(iii)] If $x \in X_j $, then $\om (\vee_i f_i)(x) = \om (f_j)(x) \subset X_j$ and if $x \not= e_j$, then
 $\NN (\vee_i f_i)(x) = \NN (f_j)(x) \subset X_j$. Furthermore, $\CC (\vee_i f_i)(x) \supset \CC (f_j)(x)$.
 In particular, if $x$ is recurrent, non-wandering or chain recurrent for $f_j$, then $x$ satisfies the
corresponding property for $\vee_i f_i$.

\item[(iv)] If $x \in X_j \setminus \{ e_j \}$, then $x$ is isolated in $\vee_i X_i$ if and only if it is isolated in $X_j$.

\item[(v)] If $(X_i,f_i)$ is chain transitive (or central) for all $i$, then $\vee_i (X_i,f_i)$ is chain transitive (resp. central).

\item[(vi)] The system $\vee_i (X_i,f_i)$ is trivial if and only if each $(X_i, f_i)$ is trivial.
\end{itemize}\end{theo}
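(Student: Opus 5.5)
The plan is to work with the product description of $\vee_i X_i$: the subset of $\prod_i X_i$ consisting of points with at most one coordinate differing from the base point. First I will check that this set is closed and invariant under $\prod_i f_i$. Closedness holds because a limit of points each having at most one non-base coordinate again has at most one non-base coordinate: if two coordinates $i \ne i'$ of the limit were non-base, nearby points would also have both coordinates non-base. So the restriction is a homeomorphism of a compact metric space fixing $e$.

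I will also verify that this agrees with the quotient description. The natural map $Z \to \prod_i X_i$ sends $x \in X_j$ to the point with $j$-th coordinate $x$ and base points elsewhere, and sends $*$ to $e$. It is continuous at $*$ because for any finite $F \subset I$, points outside a compact neighborhood of the finitely many $X_j$ with $j \in F$ have base coordinates on $F$. It is a continuous surjection that collapses exactly $E$, so the compact-Hausdorff quotient is homeomorphic to the image.

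Item (i) then follows because the inverse of $\prod f_i$ is $\prod f_i^{-1}$, and (ii) is clear since a countable union of countable sets is countable. For (iv), each $X_j \setminus \{ e_j \}$ is open in $\vee_i X_i$ (it is open in $Z$ and misses $E$), with the subspace topology of $X_j$. For (vi), the union is trivial exactly when every $X_i = \{ e_i \}$.

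For (iii), the key observation is that $X_j$ is a closed invariant subsystem. Hence orbits in $X_j$ stay there, so $\om$ computed in the union equals $\om(f_j)(x) \subset X_j$, and $\CC f_j(x)$ is contained in $\CC(\vee_i f_i)(x)$ because chains in $X_j$ are chains in the union, with the metric restricting. For $\NN$ with $x \ne e_j$, take $U \subset X_j \setminus \{ e_j \}$, which is open in the union. Since $X_j$ is invariant, $f^k(U) \subset X_j$, so any $y \in \NN(\vee f)(x)$ lies in $X_j$, and any neighborhood $V$ of $y$ can be shrunk to $V \cap X_j$. Hence $\NN$ computed in the union equals $\NN(f_j)(x)$. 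The recurrence statements follow by taking $y = x$ in each relation.

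For (v), chain transitivity: by (iii) each $x \in X_j$ satisfies $e \in \CC f(x)$ and $x \in \CC f(e)$, because $X_j$ is chain transitive and $e_j = e$. Definition \ref{def1.01} (d) then gives chain transitivity of the union. Centrality: by (iii) each point of each $X_j$ other than $e$ is non-wandering, and $e$ is fixed, so every point is non-wandering. Equivalently, $Recur$ is dense.

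The only real obstacle is the topological bookkeeping near $e$: identifying the two descriptions, and confirming that the neighborhood shrinking in the $\NN$ argument never involves $e$. The hypothesis $x \ne e_j$ ensures exactly this.
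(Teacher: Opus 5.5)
Your proposal is correct and follows essentially the same route as the paper, which treats compactness, continuity and most items as clear. It singles out the same key point you do: $X_j \setminus \{ e_j \}$ is open in $\vee_i X_i$, so points near $x \neq e_j$ have orbits confined to $X_j$, giving $\NN (\vee_i f_i)(x) \subset X_j$. Your explicit check that the product and quotient descriptions agree, and your closure arguments, fill in details the paper takes for granted.
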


\begin{proof} It is clear that $\vee_i X_i$ is compact and $\vee_i f_i$ is continuous. The space $\vee_i X_i$ is metrizable
because $I$ is countable.

The remaining results are easy to see. The one point to note is that  $X_j \setminus \{ e \}$
is an open set in $\vee_i X_i$ and so if $x \in X_j \setminus \{ e \}$ and $z \in \vee_i X_i$ is
close enough to $x$, then $z$ an element of $X_j \setminus \{ e \}$ as well  and thus the
orbit of $z$ remains in $X_j$. Hence, $\NN (\vee_i f_i)(x) \subset X_j$.

\end{proof}\vspace{.5cm}

\section{\textbf{The Cantor-Bendixson Depth and the Birkhoff Depths}} \vspace{.5cm}

Being a recurrent point is an intrinsic property.  If $x$ is a recurrent point for $f$ and $A$ is a closed invariant subset
of $X$ with $x \in A$, then $x$ is a recurrent point for the restriction $f|A$. The same need not be true of non-wandering points. Thus, in Proposition \ref{theo1.02} we showed that every point of $\om f(x)$ is non-wandering for $f$.  However, it need not be true that
the restriction $f|\om f(x)$ is central, i.e. that every point is non-wandering for $f|\om f(x)$. It is this issue which
leads to the phenomenon of Birkhoff depth.

For a  dynamical system we extend slightly the notation from the Introduction, defining:
\begin{align}\label{eq3.01}\begin{split}
z_{CB}(X) \ & = \ \{ x \in X : x \ \ \text{is not isolated in} \ \ X \}, \\
z_{BN}(X) \ & = \ \{ x \in X : x \ \ \text{is non-wandering in}  \ \ X \}, \\
z_{BL\pm}(X) \ & = \  \ol{ \bigcup_{x \in X} \ (\a f(x) \cup \om f(x))}, \\
z_{BL}(X) \  & = \ \ol{ \bigcup_{x \in X} \ \om f(x)},  \\
\xi(X) \ & =  \ \ol{Recur_f} \ = \ \ol{\{ x \in X : x \in \om f(x) \}}.
\end{split}\end{align}

Notice that if $x \in \a f(x)$, then $f^{-1}$ is topologically transitive on $\a f(x)$ and so $f$ is topologically transitive
on $\a f(x)$ as well. Hence, $\a f(x)$ contains a dense set of points recurrent for $f$. Thus,
$\xi(X)  =  \ol{\{ x \in X : x \in \a f(x) \}}$.  As it is the closure of the set  of recurrent points, $\xi(X)$ is the Birkhoff center for $f$
and the Brikhoff center for $f^{-1}$.

If $X$ does not contain any isolated periodic points, then
\begin{equation}\label{eq3.02}
\xi(X) \ \subset \ z_{BL}(X)\ \subset \ z_{BL\pm}(X)\ \subset \ z_{BN}(X)\ \subset \ z_{CB}(X).
\end{equation}

The inclusion $z_{BL\pm}(X) \subset  z_{BN}(X)$ is proper in the Shapovalov examples.
For example:
\begin{ex}\label{ex3.01}A subshift example. \end{ex}

With $s$ the shift homeomorphism on $2^{\Z} = \{ 0, 1 \}^{\Z}$, we denote by $[A]$ the closure of $\{ s^n(x) : x \in A, n \in \Z \}$ writing
$[x]$ for $[\{ x \}]$ when $x \in 2^{\Z}$. Let $e, a, b^k \in 2^{\Z}$ for $k \in \N$ be defined by $e_i = 0$ for all $i \in \Z$ and
$a_0 = 1, b^k_0 = b^k_k = 1$ and $a_i = b^k_i = 0$ otherwise. With $X = [\{ b^k : k \in \N \}]$ the set of non-wandering points is $[a]$ but the
only alpha or omega limit point is $e$.\vspace{.25cm}

It may also happen that
the inclusion $z_{BL}(X) \subset  z_{BL\pm}(X)$ is proper. \vspace{.25cm}

\begin{ex}\label{ex3.02} An alternative attachment. \end{ex}

Let $X = \Z^*$ with $f$ given by $f(i) = i+1$. Because $(X,f)$ is chain transitive we can choose $\{ y_k \}$ a full asymptotic chain
on $X$.  Now let $x_k = \begin{cases} y_k \ \ \text{for} \ \ k < 0, \\ * \ \ \text{for} \ \ k \ge 0. \end{cases}$. Define
$(aX, af)$ as in (\ref{eq1.11}) and (\ref{eq1.12}). For every $k \in \Z$, $\a f(x_k,k) = X \times \{* \}$ but $\om f(x_k,k) = (*,*)$.
Thus, $z_{BL\pm}(aX) = z_{BN}(aX) = X \times \{ * \},$ while $ z_{BL}(aX) = \{ (*,*) \}$.

For a subshift example, we define $e$ and $a$ as in Example \ref{ex3.01} and define $c \in 2^{\Z}$ by $c_i = 1 $ for  $i = -2^k, k \in \N$ and
$= 0$ otherwise. With $X = [c]$, we have $\a s(c) = [a]$, but $e$ is the only omega limit point.
 \vspace{.25cm}

Recall from Definition \ref{def0.01} that a system $(X,f)$ is simple when it is a countable,  pointed dynamical system
with base point $e$ the unique recurrent point and such that every non-isolated point is an omega limit point for some isolated point of $X$.
It is trivial when $X = \{ e \}$. From Lemma \ref{lem1.01a} we see that any simple system is chain transitive. For a simple system we have
\begin{equation}\label{eq3.03}
 z_{BL}(X)\ = \ z_{BL\pm}(X)\ = \ z_{BN}(X), \hspace{2cm}
\end{equation}
and we denote by $z(X)$ this common value. If, in addition, $X$ is non-trivial, then $z(X) = z_{CB}(X)$ as well.\vspace{.25cm}

\begin{df}\label{df3.03} A dynamical system $(X,f)$ is \emph{totally simple} when it is simple and for every ordinal $\a$
(with $X_0 = X$)
\begin{align}\label{eq3.04}\begin{split}
&X_{\a + 1} \ = \ z(X_{\a}),\\
\text{and the restriction} & \ \ (X_{\a + 1},f|X_{\a + 1}) \ \ \text{is simple}, \\
X_{\a} \ = \ & \bigcap_{\b < \a} \ X_{\b} \quad \text{for} \ \a \ \text{a limit ordinal},\\
\text{and the restriction} &  \ \ (X_{\a},f|X_{\a}) \ \ \text{is simple}.
\end{split}\end{align}\end{df}\vspace{.25cm}

When the system is totally simple and the sequence stabilizes at the ordinal $\th$, then
\begin{equation}\label{eq3.05}\begin{split}
X_{\th} = \xi(X) = \{ e \} \quad \text{with} \hspace{3cm} \\ \th_{BL}(X)\ = \ \th_{BL\pm}(X)\ = \ \th_{BN}(X) \ = \ \th, \\
\text{and} \quad \th_{CB}(X) \ = \ \th + 1. \hspace{2cm}
\end{split}\end{equation}

To build our examples, we use  our previous constructions.

\begin{theo}\label{theo3.04}(a) If $(X,f)$ is a simple system and $(aX, af)$ is an attachment construction as in Theorem \ref{theo1.05},
then $(aX, af)$ is simple with $z(aX) =  X \times \{ * \}$. If $(X,f)$ is totally simple, then $(aX, af)$ is totally simple
with $\th(aX) = 1 + \th(X)$.

(b) If $(X,f)$ is a simple system and $(sX, sf)$ is a stretched suspension construction as in Theorem \ref{theo2.01},
then $(sX, sf)$ is simple with $z(sX) = s(z(X))$. If $(X,f)$ is totally simple, then $(sX, sf)$ is totally simple
with $\th(sX) = \th(X) + 1$.

(c) If $\{ (X_i,f_i) : i \in I \}$ is a countably indexed list of simple systems and $\vee_i (X_i, f_i)$ is the pointed
union construction   as in Theorem \ref{theo2.02}, then $\vee_i (X_i, f_i)$ is simple with $z(\vee_i X_i) = \vee_i z(X_i)$.
If each $(X_i,f_i)$ is totally simple, then $\vee_i (X_i, f_i)$ is totally simple  with
$\th(\vee_i X_i) \ = \ \sup_i \th(X_i)$.
\end{theo}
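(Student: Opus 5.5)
The plan is to treat the three parts in turn. In each part I first verify the three defining properties of a simple system for the new system: countability, a fixed point as the unique recurrent point, and every non-isolated point being an omega limit of an isolated point. Then I identify $z$ of the new system. Finally I obtain the totally simple statement by transfinite induction, showing that the derived sequence of the new system is built from the derived sequence of the old one by the same construction.

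\textbf{Part (a).} Let $e$ be the base point of $X$ and use $(e,*)$ as the base point of $aX$.
\begin{itemize}
\item Countability is Theorem \ref{theo1.05}(i).
\item Recurrent points of $aX$ must lie in the non-wandering set $X \times \{*\}$ by Theorem \ref{theo1.05}(v). There $af$ is isomorphic to $f$, so $(e,*)$ is the unique recurrent point.
\item The non-isolated points are exactly $X\times\{*\}$ by Theorem \ref{theo1.05}(iv). Each of them is an omega limit of the isolated point $(x_0,0)$ by Theorem \ref{theo1.05}(ii).
\end{itemize}
Since $X$ is non-trivial (a simple system with a non-isolated point) or trivial, in either case $z(aX)=z_{CB}(aX)=X\times\{*\}$, which is isomorphic to $X$. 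One detail: if $X=\{e\}$, the full asymptotic chain is constant and $aX$ is still simple.

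For total simplicity of $aX$: $(aX)_1\cong X$, so $(aX)_{1+\a}\cong X_\a$ for all $\a$. The only subtlety is limit ordinals, where $1+\a=\a$; the intersection still matches because the sequence is decreasing. Hence $\th(aX)=1+\th(X)$.

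\textbf{Part (b).}
\begin{itemize}
\item Countability is Theorem \ref{theo2.01}(ii).
\item Recurrence: if $(x,i)$ is recurrent with $x\neq e$, then by Theorem \ref{theo2.01}(iii) $x \in \om f(x)$, so $x=e$, a contradiction. The points $(e,i)$ with $i\in\Z$ are not recurrent by Theorem \ref{theo2.01}(iv). So $(e,*)$ is the unique recurrent point.
\item By Theorem \ref{theo2.01}(v), the non-isolated points are the $(x,i)$ with $x$ non-isolated in $X$, together with $\{e\}\times\Z^*$ (these are limits of the $D_k\times\{i\}$).
\end{itemize}
For the omega-limit condition, take a non-isolated $x\neq e$. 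Then $x\in\om f(y)$ for some isolated $y\in D_k$. By Theorem \ref{theo2.01}(iii), $\om(sf)(y,0)=\pi^{-1}(\om f(y))\ni(x,i)$. The isolated point $(y,0)$ also has $e$ in its omega limit, since $\om f(y)$ is closed invariant and so contains the unique minimal set $\{e\}$. Hence $\{e\}\times\Z^*\subset\om(sf)(y,0)$. This needs $X$ non-trivial; if $X=\{e\}$, then $sX=\{e\}\times\Z^*$ and the points $(e,i)$, $i\in\Z$, are isolated, so the condition holds.

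Thus $z(sX)=\pi^{-1}(z(X))\cup(\{e\}\times\Z^*)=s(z(X))$, using Theorem \ref{theo2.01}(vii) for the partition $\{D_k\cap z(X)\}$. In the case $z(X)=\{e\}$ this reads $s\{e\}=\{e\}\times\Z^*$.

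By induction, $(sX)_\a=s(X_\a)$:
\begin{itemize}
\item The successor step is the above applied to $X_\a$, which is simple.
\item At limit ordinals, $\bigcap_\b s(X_\b)=s(\bigcap_\b X_\b)$, since $s(A)=\pi^{-1}(A)\cap sX$ together with $\{e\}\times\Z^*$.
\end{itemize}
So at $\a=\th(X)$ we get $s\{e\}=\{e\}\times\Z^*$, which is non-trivial simple, and one more step gives $\{(e,*)\}$. Hence $\th(sX)=\th(X)+1$.

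\textbf{Part (c).}
\begin{itemize}
\item Countability is Theorem \ref{theo2.02}(ii).
\item Recurrent points lie in a single $X_j$ and are recurrent there by Theorem \ref{theo2.02}(iii), so they equal $e$.
\item A point $x\in X_j\setminus\{e\}$ is isolated iff it is isolated in $X_j$ by Theorem \ref{theo2.02}(iv). If it is non-isolated, it is $\om$ of an isolated point of $X_j$, again by Theorem \ref{theo2.02}(iii).
\item The point $e$ itself is non-isolated only if some $X_i$ is non-trivial. In that case $e$ is the omega limit of any isolated point of $X_i$, since $\{e\}$ is the unique minimal set.
\end{itemize}
So $z(\vee_i X_i)=\vee_i z(X_i)$. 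Induction, with intersections commuting with $\vee$ since the pieces are disjoint off $e$, gives $(\vee X_i)_\a=\vee_i (X_i)_\a$. This set is $\{e\}$ iff every $(X_i)_\a$ is trivial (Theorem \ref{theo2.02}(vi)), i.e. iff $\a\ge\sup_i\th(X_i)$.

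\textbf{Main obstacle.} I expect the main difficulty in (b): handling the points $(e,i)$, the trivial cases, and the limit-ordinal identity $\bigcap s(X_\b)=s(\bigcap X_\b)$.
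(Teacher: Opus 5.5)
Your proposal is correct and takes essentially the same route as the paper. Simplicity of $aX$, $sX$ and $\vee_i X_i$ is read off from Theorems \ref{theo1.05}, \ref{theo2.01} and \ref{theo2.02}, and the depth formulas come from the transfinite identities $(aX)_{1+\alpha}\cong X_\alpha$, $(sX)_\alpha = s(X_\alpha)$ for $\alpha\le\theta(X)$, and $(\vee_i X_i)_\alpha=\vee_i (X_i)_\alpha$, with somewhat more care than the paper about limit ordinals and trivial cases. Note that $z(sX)=s(z(X))$ holds only for non-trivial $X$ (for $X=\{e\}$ one has $z(s\{e\})=\{(e,*)\}$), which is why your separate treatment of the final step from $s\{e\}$ to $\{(e,*)\}$ is needed.
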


\begin{proof} (a) From Theorem \ref{theo1.05} (ii) and (iv), it is clear that if $(X, f)$ is any countable, pointed dynamical
system with $e$ the unique recurrent point, then $(aX, af)$ is simple with $z(aX) =   X \times \{ * \}$. In particular, $(X,f)$ itself need not be simple.
After one step we are at a subsystem isomorphic to $(X,f)$. So if $(X,f)$ is totally simple, then $(aX, af)$ is totally simple with
$\th(aX) = 1 + \th(X)$.

(b) From Theorem \ref{theo2.01} (iii),  (iv) and (v) it is clear that if $(X, f)$ is simple, then $(sX, sf)$ is simple with
$z(sX) = s(z(X))$. Note that by (vii), the map $s(f|z(X))$ is the restriction of $sf$ to $s(z(X))$. If $(X,f)$ is totally simple,
then by induction $(sX)_{\a} = s(X_{\a})$ for all ordinals $\a < \th(X)$. Furthermore, at $\th = \th(X)$, $X_{\th} = \{ e \}$ and
so $(sX)_{\th} = \pi^{-1}(e)$ which we have denoted $s\{ e \}$. It is isomorphic to the translation on $\Z^*$ which is simple and which
reaches the trivial system $\{ (e, *) \}$ in the next step. Hence, $\th(sX) = \th(X) + 1$.

(c) Since a point of $X_j$ is recurrent in $\vee_i X_i$ if and only if it is recurrent in $X_j$, it is clear that, in general,
 $\xi(\vee_i X_i) = \vee_i \xi(X_i)$. From induction using Theorem \ref{theo2.02} (iii) and (iv) it is clear that for each of
 the three Birkhoff sequences and every ordinal $\a$, $(\vee_i X_i)_{\a} = \vee_i ((X_i)_{\a})$. From Theorem \ref{theo2.02} we see
  that $\vee_i (X_i, f_i)$  is simple if and only if all of the $(X_i, f_i)$'s are simple and that
$z(\vee_i X_i) = \vee_i z(X_i)$. Then $(\vee_i X_i)_{\a} = \vee_i ((X_i)_{\a})$ for all $\a$ implies that
$\vee_i (X_i, f_i)$  is totally simple if  all of the $(X_i, f_i)$'s are totally simple. From (vi) it follows that the
procedure terminates exactly when all the $(X_i, f_i)$ procedures have terminated and this occurs at $\sup_i \th(X_i)$.

\end{proof}\vspace{.25cm}

Now we construct our examples by using increasing transfinite sequences. For the first version we define
\begin{align}\label{eq3.06}\begin{split}
Y^0 \  = \ & \{ e \}, \\
(Y^{\a+1},g^{\a+1}) \ & = \ (aY^{\a},ag^{\a}),  \\
(Y^{\a}, g^{\a}) \  = \ \vee_{\b < \a} \  & (Y^{\b},  g^{\b}) \ \  \text{for} \ \   \a \ \ \text{a limit ordinal}.
 \end{split} \end{align}

 This does not do what we want because for an infinite ordinal $\a,$ \\ $ \ 1 + \a = \a$. The result is nonetheless interesting.
The transfinite sequence $(Y^{\a},g^{\a})$ for countable ordinals $\a \ge \om$ (the first infinite ordinal) consists of increasingly complicated
simple systems while for all such $\a, \ \ \th(Y^{\a}) \ = \ \om$.

For the examples we want, we use instead
\begin{align}\label{eq3.07}\begin{split}
X^0 \  = \ & \{ e \},  \\
(X^{\a+1},f^{\a+1}) \ & = \ (sX^{\a},sf^{\a}),  \\
(X^{\a}, f^{\a}) \  = \ \vee_{\b < \a} \  & ( X^{\b},  f^{\b}) \ \  \text{for} \ \   \a \ \ \text{a limit ordinal}.
 \end{split} \end{align}

 Now by induction using Theorem \ref{theo3.04}(b) and (c),we obtain for every countable ordinal $\th$
\begin{equation}\label{eq3.08}\begin{split}
\th_{BL}(X^{\th})\ = \ \th_{BL\pm}(X^{\th})\ = \ \th_{BN}(X^{\th}) \ = \ \th, \\
\text{and} \quad \th_{CB}(X^{\th}) \ = \ \th + 1. \hspace{2cm}
\end{split} \end{equation}
This proves Theorem \ref{theo0.02}.

It is possible to build uncountable examples $(X,f)$ such that for every countable ordinal $\th$
\begin{equation}\label{eq3.09}
\th_{BL}(X)\ = \ \th_{BL\pm}(X)\ = \ \th_{BN}(X) \ = \ \th_{CB}(X) \ = \ \th.
\end{equation}
For this one begins with $(X^0, f^0)$ a zero-dimensional, topologically transitive system, so that $X^0$ is perfect.
However, the procedure requires replacing the stretched suspension construction with the more general
spin construction from \cite{AG19}.

  \bibliographystyle{amsplain}

\begin{thebibliography}{10}


\bibitem{A93}
E. Akin,  {\bfseries The general topology of dynamical systems}, Grad. Studies in
Mathematics v. 01,(1993), Second printing (1996), Amer. Math. Soc., Providence. \vspace{.25cm}
\vspace{.25cm}

%

\bibitem{AC12}
 E. Akin, J. Carlson, \emph{Conceptions of topological transitivity}, Topology and Its Applications, (2012), \textbf{159}:2015--2030.	
\vspace{.25cm}

\bibitem{AG19}
E. Akin, E. Glasner, \textbf{WAP systems and labeled subshifts}, (2019),
Memoirs AMS 1265, Amer. Math. Soc., Providence. \vspace{.25cm}

\bibitem{ACS}
L. Alsed\`{a}, M. Chas, and J. Smital, \emph{On the structure of the $\om$-limit sets for continuous
maps of the interval}, International Journal of Bifurcation and Chaos, (1999), \textbf{09}:1719--1729.
\vspace{.25cm}

\bibitem{BK14}
E. Bilokopytov, S. F. Kolyada, \emph{Transitive maps on topological spaces},Ukrainian Mathematical Journal, (2014), \textbf{65}:1293--1318.
\vspace{.25cm}

\bibitem{B}
G. D. Birkhoff, \emph{Uber gewisse Zentralbewegungen Dynamischer Systeme}.
Ges. Wiss. G¨ottingen Nachr., Math.-Phys. Klasse (1926), 81--92.
\vspace{.25cm}

\bibitem{DF}
Y. N. Dowker, F. G. Friedlander  \emph{On limit sets in dynamical systems},
Proc. London Math. Soc. (1954),  \textbf{4}:168--176.
\vspace{.25cm}

\bibitem{K}
H. Kato, \emph{The depth of centres of maps on dendrites}, Journal of the Australian Mathematical
Society. Series A. Pure Mathematics and Statistics (1998), no. 1, \textbf{64}:44--53.
\vspace{.25cm}

\bibitem{M}
A. G. Maier, \emph{On the ordinal number of central trajectories}. Dokl. Akad.
Nauk SSSR 59 (1948),  no. 8, 1393--1396.
\vspace{.25cm}

\bibitem{N}
D. A. Neumann, \emph{Central sequences in dynamical systems}, Amer. J. Math.,  (1978), \textbf{100}:1--18.
\vspace{.25cm}

\bibitem{PF}
D. C. Perez Flores, \emph{Countable spaces and countable dynamics}, thesis, University of Birmingham, (2017).
\vspace{.25cm}

\bibitem{S65}
A. N. Sarkovskii, \emph{Attracting and attracted sets}, Dokl. Akad. Nauk SSSR (1965),
\textbf{160}:10361038. English translation in Sov. Math. Dokl. (1965), \textbf{6}:268--270.
\vspace{.25cm}

\bibitem{S00}
S. A. Shapovalov, \emph{A new solution of one Birkhoff problem}, J. of Dynamical and control systems,(2000),  \textbf{6}: 331--339.

\end{thebibliography}

\end{document}